\date{\today}
\newtheorem{theorem}{Theorem}
\newtheorem{lemma}{Lemma}
\newtheorem{definition}{Definition}
\theoremstyle{definition}
\newtheorem{remark}{Remark}[section]
\DeclareMathOperator{\WF}{WF}
\newcommand{\eps}{\varepsilon}
\newcommand{\R}{{\bf R}}
\newcommand{\Id}{\mbox{Id}}
\renewcommand{\r}[1]{(\ref{#1})}
\newcommand{\PDO}{$\Psi$DO}
\newcommand{\be}[1]{\begin{equation}\label{#1}}
\newcommand{\ee}{\end{equation}}
\renewcommand{\d}{\mathrm{d}}
\renewcommand{\i}{\mathrm{i}}
\newcommand{\bo}{\partial \Omega}
\title[Local recovery of the compressional and shear speeds]{Local recovery of the compressional and shear speeds from the hyperbolic DN map}
\author[P. Stefanov]{Plamen Stefanov}
\address{Department of Mathematics, Purdue University, West Lafayette, IN 47907}
\author[G. Uhlmann]{Gunther Uhlmann}
\address{Department of Mathematics, University of Washington, Seattle, WA 98195, Department of Mathematics University of Helsinki, Finland, IAS, HKUST, Clear Water Bay, Hong Kong, China}
\author[A. Vasy]{Andras Vasy}
\address{Department of Mathematics, Stanford University, Stanford  CA 94305}
\thanks{The authors were partially supported by the National Science Foundation under
grant DMS-1600327 (P.S.), DMS-1265958 (G.U.) and
DMS-1361432 (A.V.).}
\begin{document}
\begin{abstract}
We study the isotropic elastic wave equation in a bounded domain with boundary. We show that  local knowledge of the Dirichlet-to-Neumann map determines uniquely the speed of the p-wave locally if there is a strictly convex foliation with respect to it, and similarly for the  s-wave speed.  
\end{abstract} 
\maketitle

\section{Introduction}  
Consider the isotropic elastic wave equation in a smooth bounded domain $\Omega$. We study the following problem: can we determine the Lam\'e parameters $\lambda$, $\mu$ and the density $\rho$ from the knowledge of the Dirichlet-to-Neumann $\Lambda$ (DN) map on the boundary? In fact, we are interested in the local problem: local recovery of those parameters from local or even microlocal information about $\Lambda$. Our main motivation is the local seismology problem of recovery of the inner structure of the Earth from local measurements on its surface  of seismic waves. 

This problem is well studied for the wave equation $(\partial_t^2-\Delta_g)u=0$ related to a Riemannian metric $g$ with either full or partial boundary data.  It is known that one can recover $g$ up to an isometry fixing $\bo$ pointwise  \cite{BelishevK92}, using the boundary control method developed by \cite{Belishev_87}. The latter relies on the  unique continuation result of Tataru \cite{tataru95}. We refer to  \cite{KathcalovKL-book} for related results and more references. Logarithmic type of stability is proved in \cite{BKL, LaurentL2017}.
       H\"older type of stability estimates with full data have been proven in  \cite{SU-IMRN, Carlos_12, BellassouedDSF,BaoZhang} and most recently in \cite{SUV-DNmap2014}, under some assumptions on the metric, for example absence of conjugate points. 

In the elastic case, the results are less complete. Unique continuation holds \cite{Nakamura_UC} but the boundary control method is not known to work, see, e.g., \cite{belishev_2007}. The reason is that it is not possible, or at least not known how to decouple the elasticity system completely even though it is easy to do that on the principal symbol level or even for the full symbol, see \r{VEU} below, but only microlocally.  A Lam\'e type of system having the same principal part which can be decoupled fully was studied in \cite{Belishev2006} and the boundary control method was used for it for a unique recovery of the two wave speeds locally with a local data. 
 Numerical reconstruction is proposed in	\cite{Lechleiter-S-2017}.
 
 Rachelle \cite{Rachele_2000} proved that one can recover  the jet of $\lambda$, $\mu$ and $\rho$ at $\bo$ explicitly. In \cite{Rachele00,Rachele03}, she showed that one can recover those three parameters in $\Omega$ provided that $\Lambda$ is known on the whole boundary and assuming that the two wave speeds are simple (strict convexity and no conjugate points). The proof is based on recovering the lens relations related to the two speeds and then applying known rigidity results.  The recovery of  all the three parameters in \cite{Rachele03} requires the study of the second order term and an inversion of the geodesic ray transform. The second author and Hansen \cite{HansenUhlmann03} studied this problem with a residual stress and without the assumption of no-conjugate points or caustics and showed that one can recover both lens relations and derive several consequences of that.

In this work we show that one can recover uniquely the two wave speeds $c_p$ and $c_s$ locally under the assumption of existence of a strictly convex foliation. This condition allows for conjugate points, see, e.g., \cite{Monard14}.  
If $\Omega$ is a ball and the speeds increase when the distance to the center decreases (typical for geophysical applications), the foliation condition is satisfied, see section~\ref{sec_ex}. To prove the main result, we show that one can recover the lens relations related to the two speeds in an explicit way and then apply the results of the authors \cite{SUV_localrigidity}, see also \cite{SUV_anisotropic}, about local recovery of a sound speed in the acoustic equation from the associated lens relation, also known locally. That argument also implies stability as a consequence of the stability result in \cite{SUV_localrigidity}  
but we do not make this formal. Also, we can apply the result if there is an internal closed strictly convex surface where the coefficients jump, like in the elastic Earth model, and recover the two speeds between the boundary and that surface. Indeed,  for that we only need the lens relation along geodesics not hitting that surface; and that can be extracted from the microlocal support of the DN map. 
Note that this recovers two quantities depending on the three parameters $\lambda$, $\mu$ and $\rho$. Recovery of all three parameters would require an analysis of the next order term in the geometric optics construction, similarly to what is done in \cite{Rachele03}.

\section{Main Result}

The isotropic elastic system in a smooth bounded domain $\Omega\subset \R^3$ is described as follows. The elasticity tensor is defined by
\[
c_{ijkl} = \lambda \delta_{ij}  \delta_{kl} +\mu(\delta_{ik}\delta_{jl} + \delta_{il}\delta_{jk}),
\]
where $\lambda>0$, $\mu>0$ are the Lam\'e parameters. The elastic wave operator is given by
\[
(Eu)_i = \rho^{-1}\sum_{jkl} \partial_j c_{ijkl} \partial_l u_k,
\]
where $\rho>0$ is the density and the vector function $u$ is the displacement.

The operator $E$ is symmetric on $L^2(\Omega;\mathbf{C}^3,\rho\,\d x)$. It has has a principal symbol
\be{s0}
\sigma_p(-E)v  = \frac{\lambda+\mu}{\rho} \xi \xi\cdot v + \frac{\mu}{\rho} |\xi|^2 v,\quad v\in\mathbf{C}^n.
\ee
Taking $v=\xi$ and $v\perp\xi$, we recover the well known fact that that $\sigma_p(-E)$ has eigenvalues 
\[
c_p= \sqrt{(\lambda+2\mu)/\rho}, \quad c_s = \sqrt{\mu/\rho}
\]
of multiplicities $1$ and $2$, respectively and eigenspaces $\R\xi$, and $\xi^\perp$. Those are known as the speeds of the p-waves and the s-waves, respectively.  
 The eigenspaces correspond to  the polarization of those waves. The characteristic variety $\det \sigma_p(E) =0$ is the union of $\Sigma_p := \{\tau^2=c_p^2|\xi|^2\}$ and $\Sigma_s := \{\tau^2=c_s^2|\xi|^2\}$, each one  having two connected components (away from the zero section), determined by the sign of $\tau$. 

Let $u$ solve the elastic wave equation 
\be{1}
\begin{cases}
u_{tt} -Eu &=0\quad \text{in $\R\times\Omega$},\\ 
 \ u|_{\R\times\bo}   &=f,\\
\ \ \ \  u|_{t<0}&=0,
\end{cases}
\ee
with $f$ given so that $f=0$ for $t<0$. The Dirichlet-to-Neumann $\Lambda$ map is defined by
\be{2a}
(\Lambda f)_i = Nu:=  \Sigma_j \sigma_{ij}(u)\nu^j\big|_{\bo},
\ee
where $\nu$ is the outer unit normal on $\bo$, and $\sigma_{ij}(u)$ is the stress tensor
\be{1s}
\sigma_{ij}(u) = \lambda \nabla\cdot u\delta_{ij} + \mu(\partial_j u_i + \partial_i u_j). 
\ee
Note that $Eu = \rho^{-1}\delta\sigma(u)$, where $\delta$ is the divergence of the 2-tensor $\sigma(u)$.

Let $\Omega_{\rm ext}$ be an open domain containing $\bar\Omega$ and extend the coefficients there in a smooth way. 
\begin{definition}\label{def_5.1}
  Let $\kappa: \Omega_{\rm ext}\to[0,\infty)$ be  a smooth function which level sets $\kappa^{-1}(q)$, $q\le 1$, restricted to $\bar\Omega$, are strictly convex viewed from $\kappa^{-1}((0,q))$ w.r.t.\ $g$;   $d\kappa\not=0$ on these level sets,   $\kappa^{-1}(0)\cap \bar\Omega\subset\bo$, and $M_0\subset \kappa^{-1}([0,1])$. Then we call $\kappa^{-1}([0,1])\cap \bar\Omega$ a strictly convex foliation of $\bar\Omega$ w.r.t.\ $g$. 

\end{definition}

A special case is when $\bo$ is strictly convex w.r.t.\ $g$ and  $\kappa^{-1}(0)=\bo$.

We introduce the lens rigidity problem next. For a compact manifold $(M,g)$ with a boundary,  let the manifolds $\partial_\pm SM$ consist of all vectors $(x,v)$ with $x\in\partial M$, $v$ unit in  the metric $g$,  and  pointing outside/inside $M$. 
We define the \textit{scattering relation}
\begin{equation} \label{L}
L: \partial_- SM \longrightarrow \partial_+SM 
\end{equation}
in the following way: for each $(x,v)\in \partial_- SM$, $L(x,v)=(y,w)$, where $(y,w)$ are the exit point and  direction, if exist, of the maximal unit speed geodesic $\gamma_{x,v}$ in the metric $g$, issued from $(x,v)$. Let 
\[
\ell: \partial_-SM \longrightarrow \R\cup\infty
\]
be its length, possibly infinite. If $\ell<\infty$, we call $M$ non-trapping. The maps $(L,\ell)$ together are called \textit{lens relation} (or lens data).  

It is convenient to identify vectors in $\partial_- SM$ with their projections on the unit ball bundle $B(\partial M)$; and similarly for $\partial_+SM$. Then we can view $L$ and $\ell$ as maps from $B(\partial M)$ to itself; or from $B(\partial M)$ to $\R\cap\infty$, respectively. 

Below, we denote by $L_p$ and $L_s$ the lens relations in $M=\bar\Omega$ w.r.t.\ the metrics $c_p^{-2}\d x^2$ and $c_s^{-2}\d x^2$, respectively. Similarly, we denote the corresponding $\ell$'s by $\ell_p$ and $\ell_s$.

\begin{theorem}\label{thm1}
Let $\rho$, $\lambda$, $\mu$ be smooth in $\bar\Omega$. 
Let $\kappa^{-1}(q)$, $q\in [0,1]$ be a strictly convex foliation w.r.t.\ $c_p$, and let  $\Gamma=\kappa^{-1}([0,1])\cap  \bo$.  Then for every $\eps>0$,  $c_p$ is uniquely determined on the foliation $\kappa^{-1}([0,1])\cap  \bar\Omega$    by knowledge, up to a smooth function, of the kernel $\Lambda(t_2,x_2,t_1,x_1)$ of  $\Lambda$ on $(0,T)\times\Gamma\times(0,\eps) \times\Gamma$, if $T$ is greater than the length of all geodesics, in the metric $c_p^{-2} \d x^2$,  in $\bar\Omega$  having the property that each one is tangent to some of the hypersurfaces 
in the foliation. %$\kappa^{-1}(q)\cap\bar\Omega$. 

The same statement remains true for $c_p$ replaced by $c_s$. 
\end{theorem}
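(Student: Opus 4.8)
The plan is to reduce the uniqueness of $c_p$ (resp. $c_s$) to the local lens rigidity result of \cite{SUV_localrigidity} by extracting the lens relation $(L_p,\ell_p)$ (resp. $(L_s,\ell_s)$) from the microlocal data of $\Lambda$. The first step is microlocal decoupling: on the principal symbol level, and in fact to all orders microlocally away from the radial directions where $\Sigma_p$ and $\Sigma_s$ meet the zero section, the elasticity system splits into two scalar half-wave-type equations governing the p-part and the s-part, with the p-part propagating along geodesics of $g_p=c_p^{-2}\,\d x^2$ and the s-part along geodesics of $g_s=c_s^{-2}\,\d x^2$ (this is the content of \r{VEU} referenced in the introduction). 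I would make this precise by constructing a microlocal parametrix for the forward problem \r{1}: write $u$ as a sum of two Fourier integral operators applied to the boundary data $f$, one associated with the p-canonical relation and one with the s-canonical relation, with pseudodifferential projectors $\Pi_p$ (projection onto $\R\xi$) and $\Pi_s$ (projection onto $\xi^\perp$) selecting polarizations at the boundary.

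The second step is to read off $L_p$ and $\ell_p$ from the wave front set of the Schwartz kernel $\Lambda(t_2,x_2,t_1,x_1)$. Because $N$ differentiates and restricts to $\bo$, the kernel of $\Lambda$ is a sum of conormal/FIO pieces whose canonical relations are exactly the broken-bicharacteristic relations for the p- and s-speeds, composed with the reflection at $\bo$; concretely, $(t_2,x_2,\tau_2,\xi_2)$ and $(t_1,x_1,\tau_1,\xi_1)$ are related in $\WF(\Lambda)$ precisely when there is a p-geodesic (or s-geodesic) from $x_1$ to $x_2$ whose endpoints project to the given boundary covectors and whose travel time is $t_2-t_1$. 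Since $c_p<c_s$ or $c_s<c_p$ need not hold globally, the two families may overlap, but they are still distinguished by polarization: restricting $f$ to be tangential vs.\ normal at the boundary (via $\Pi_s$ vs.\ $\Pi_p$) isolates one family, and one checks the corresponding leading symbol on the parametrix is elliptic so the singularity is actually present and not cancelled. This recovers $L_p$ and $\ell_p$ on the set of covectors over $\Gamma$ corresponding to geodesics staying in $\bar\Omega$; restricting the data to $(0,T)\times\Gamma\times(0,\eps)\times\Gamma$ and taking $T$ larger than the length of all geodesics tangent to a leaf guarantees we see every p-geodesic entering through $\Gamma$ that reaches the depth of the foliation before exiting, which is exactly the data required by the local lens rigidity theorem.

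The third step is to invoke \cite{SUV_localrigidity}: given the strictly convex foliation condition with respect to $c_p$ and knowledge of the lens relation $(L_p,\ell_p)$ near $\Gamma$, $c_p$ is uniquely (and stably) determined on $\kappa^{-1}([0,1])\cap\bar\Omega$. The argument is layer-by-layer, peeling off the foliation from the boundary inward, which is why only local/microlocal boundary data near $\Gamma$ and a finite time $T$ are needed. The identical argument applies verbatim with $p$ replaced by $s$, using the tangential-polarization part of $\Lambda$ and the foliation condition for $c_s$; this gives the last sentence of the theorem.

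I expect the main obstacle to be the rigorous microlocal decoupling and, in particular, showing that the s-part and p-part contributions to $\WF(\Lambda)$ can be cleanly separated and that the relevant leading symbols are elliptic so that the lens relations are genuinely recoverable and not obscured by cancellations or by the lower-multiplicity p-polarization interacting with the doubly-degenerate s-polarization. The polarization-based projectors $\Pi_p,\Pi_s$ decouple the system only modulo smoothing \emph{microlocally}, not globally (this is exactly the known difficulty mentioned in the introduction), so care is needed near glancing rays and near the radial set; the finite-time and near-$\Gamma$ localization, together with the foliation being strictly convex so that the relevant geodesics are non-glancing except at isolated tangencies, is what makes this manageable. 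A secondary technical point is handling the reflected/multiply-reflected rays at $\bo$ outside $\Gamma$ and checking they do not interfere with the portion of the lens relation we need; but since we only use geodesics that exit through $\Gamma$ before reaching depth $q=1$, and $T$ is chosen accordingly, these do not enter.
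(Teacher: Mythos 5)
Your overall architecture is exactly the paper's: recover the lens relations $(L_p,\ell_p)$, $(L_s,\ell_s)$ from the microlocal structure of $\Lambda$ restricted to $(0,T)\times\Gamma\times(0,\eps)\times\Gamma$, and then feed them into the local lens/boundary rigidity theorem of \cite{SUV_localrigidity} (this last step, your third, is the entirety of the paper's proof of Theorem~\ref{thm1}; the content is in Theorem~\ref{thm2}). However, your route through the middle step has a genuine gap precisely at its crux. You assert that ``one checks the corresponding leading symbol on the parametrix is elliptic so the singularity is actually present and not cancelled.'' For the elastic system this is the hard part: a full parametrix for the boundary value problem must include the reflected p- and s-waves and the mode conversion at $\bo$, and verifying that the Neumann trace of the incident wave is not cancelled by the reflected contributions amounts to a boundary (Lopatinskii-type) symbol computation that is delicate for elasticity (Rayleigh-type phenomena live exactly there). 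The paper deliberately avoids this: it constructs a purely p-polarized microlocal solution $u_0$ from Cauchy data at $t=0$ placed \emph{outside} $\bar\Omega$, takes $f_1$ to be the incoming piece of its boundary trace, and then shows $\zeta_2'\in\WF(Nu)$ by contradiction --- extending $u$ by zero outside $\Omega$ produces a solution of $(\partial_t^2-E)v=-\rho^{-1}(Nu)\otimes\delta_{\mathrm b}$, and if $Nu$ were smooth near the exit covector, propagation of singularities would force $v$ to be singular along all of $\gamma$ or none of it, contradicting that $v$ is singular inside and vanishes outside. No reflection parametrix and no symbol ellipticity check is needed. Your plan is not necessarily wrong, but as written the decisive no-cancellation claim is unproved, and proving it your way is substantially harder than the alternative.

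A second, smaller omission: you never recover the boundary jets of $\rho,\lambda,\mu$. The paper first invokes \cite{Rachele_2000} to determine these jets on $\Gamma$ from the data, which is what allows (i) the coefficients to be extended outside $\Omega$ in a data-determined way so that the exterior segment of the probe bicharacteristic, and hence the polarized Dirichlet data $f_1$, can be \emph{constructed from the given data}, and (ii) the tangential boundary covectors appearing in $\WF(\Lambda)$ to be converted into full interior covectors (you need $c_p|_\Gamma$ to solve for the normal component from the characteristic condition), i.e., to pass from boundary wave-front data to the actual lens relation. Relatedly, your statement that one isolates the p- and s-families by ``restricting $f$ to be tangential vs.\ normal at the boundary'' conflates polarization relative to the propagation direction $\xi$ (which is what $\Pi_p,\Pi_s$ project onto) with polarization relative to the boundary normal; for obliquely incident waves these are different, and the correct construction of purely polarized probes again requires knowing the coefficients near $\Gamma$ first.
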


We refer to Figure~\ref{pic} for an illustration of the set where we can recover the speeds. 

\begin{figure}
\includegraphics[page=1,scale=0.7]{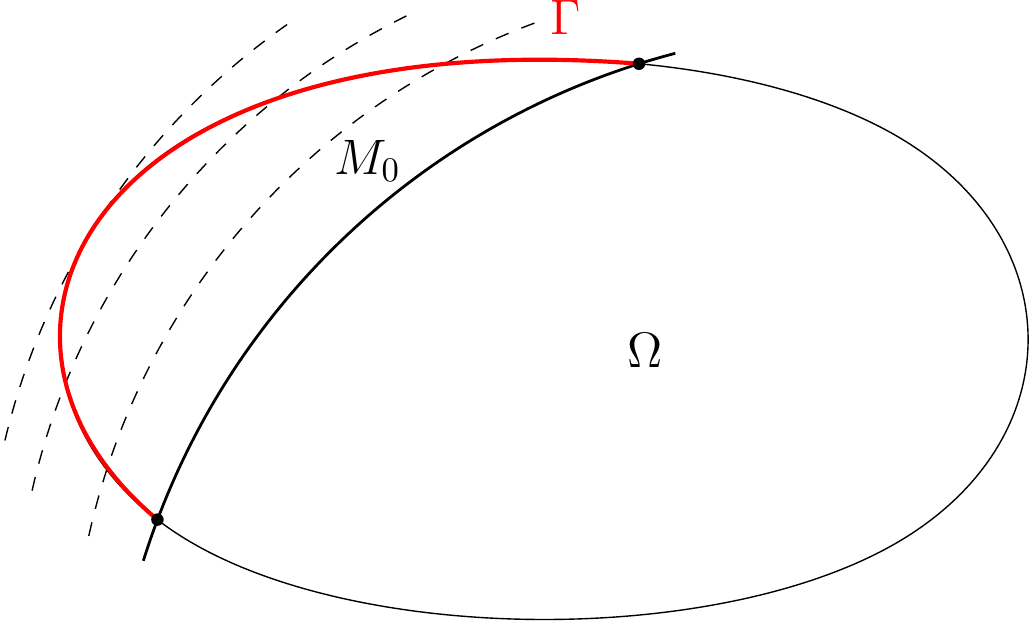}
\caption{The foliation related to either $c_p$ and $c_s$, and the part $\Gamma$ of $\bo$ where $\Lambda$ is known. We can recover the speed in $M_0$.}\label{pic}
\end{figure}

We only need microlocal information about $\Lambda$, wee Remark~\ref{remark5.1}. Also, 
since $(L_p,\ell_p)$ and $(L_s,\ell_s)$ are part of the wave front set of the kernel of $\Lambda$, we can argue that they can be recovered stably form it; and in fact, they are directly observable in seismic experiments. Then one can apply the stability result in \cite{SUV_localrigidity} to conclude that $c_s$ and $c_p$ are stably recoverable from $\Lambda$. We will not make this statement more precise in this paper.

\section{Preliminaries}
\subsection{An invariant formulation} 
We have 
\be{E}
(Eu)_i = \rho^{-1}\sum_{j} (  \partial_i\lambda\partial_j u_j + \partial_j\mu \partial_j u_i + \partial_j\mu \partial_i u_j     ). 
\ee 
This can also be written in the following divergence form. 
\be{L0}
Eu = \rho^{-1}(  \d \lambda \delta u+ 2 \delta \mu \d^s u   ),
\ee
where $\d^su=(\partial_ju_i+ \partial_iu_j)/2$ is the symmetric differential, and $\delta= -(\d^s)^*$ is the divergence of symmetric fields. 

To prepare ourselves for changes of variables needed in the analysis near surfaces that we will flatten out, we will write $E$ in invariant way in the presence of a Riemannian metric $g$. We view $u$ as an one form (a covector field) and we define the symmetric differential $\d^s$ and the divergence $\delta$ by
\[
(\d^s u)_{ij}= \frac12\left(\nabla_i u_j+\nabla_j u_i\right), \quad (\delta v)_i = \nabla^j v_{ij},\quad \delta u = \nabla^iu_i,
\]
where $\nabla$ is the covariant differential, $\nabla^j = g^{ij}\nabla_i$, $u$ is a covector field, and  $v$ is a symmetric covariant tensor field of order two.   Note that $\d^s$ increases the order of the tensor by one while $\delta$ decreases it by one. Then we define $E$ by \r{L0}. The stress tensor \r{1s} is given by
\be{1s2}
\sigma(u) = \lambda (\delta u)g + 2\mu \d^s u,
\ee
and then $Eu=\rho^{-1}\delta\sigma(u)$. 
The Neumann boundary condition $Nu$ at $\bo$ is still given by prescribing the values of $\sigma_{ij}(u)\nu^j$ on it as in \r{2a}.   
The operator $E$, defined originally on 
%functions smooth up to $S_1,\dots S_k$ and $\bo$, satisfying %the transmission conditions \r{tr}, and 
$C_0^\infty(\Omega)$  extends to a self-adjoint operator in $L^2(\Omega, \rho\d x)$. This extension is the one satisfying the zero Dirichlet boundary condition on $\R\times\bo$. In particular, this shows that the mixed problem \r{1} is solvable with smooth data $f$ at least since one can always extend $f$ inside and reduce the problem to solving one with a zero boundary condition and a non-zero source term; and then use the Duhamel's principle for the latter. 

We show next that the data $(u,Nu)$ on the boundary is equivalent to knowing the Cauchy data on it, see also \cite[sec.~3.1]{Rachele_2000}. In next lemma, we use semigeodesic coordinates $x=(x',x^3)$ to a given hypersurface $S$, with $x^3>0$ on one side of it, defining the orientation. %We denote the dual varaibles by $(\xi',\xi_3)$. 
The Euclidean metric then takes the form $g$ in those coordinates with $g_{\alpha 3}=\delta_{\alpha 3}$ for $1\le\alpha\le 3$. 

\begin{lemma}\label{lemma_N}
For every hypersurface $S$, the pair 
$(u,Nu)|_{S}$ determined uniquely the Cauchy data $(u,\partial_\nu u)|_S$. More precisely, in semigeodesic coordinates,  the normal derivative of $u$ on $S$ can be obtained from $u|_S$ and $Nu|_S$ by the relations
\be{NN}
\begin{split}
\partial_3 u_\alpha  &=  \frac1\mu (Nu)_\alpha  -\partial_\alpha u_3 + 2 \Gamma_{j3}^ku_\alpha   , \quad \alpha=1,2,\\
\partial_3u_3 &= \frac1{2\mu} \left( (Nu)_3 - \lambda (\delta u)  \right),
\end{split}
\ee
where $\Gamma_{ij}^k$ are the Christoffel symbols of the Euclidean metric in semigeodesic coordinates. 
\end{lemma}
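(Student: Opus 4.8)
The plan is to compute $Nu$ explicitly in the semigeodesic coordinates $x=(x',x^3)$ adapted to $S$ and to read off the normal derivative from it. In those coordinates $g_{\alpha 3}=\delta_{\alpha 3}$ (hence also $g^{\alpha 3}=\delta^{\alpha 3}$ and $g^{33}=1$) and $\Gamma_{33}^k=0$, since the curves $x^3\mapsto (x',x^3)$ are unit speed geodesics; moreover the relevant unit normal is $\nu=\partial_3$, so by \r{2a} and \r{1s2} we have $(Nu)_i=\sigma_{i3}(u)|_S$ with $\sigma(u)=\lambda(\delta u)g+2\mu\,\d^s u$.

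I would first dispose of the tangential components $i=\alpha\in\{1,2\}$. Since $g_{\alpha 3}=0$ the term $\lambda(\delta u)g_{\alpha 3}$ drops out and $(Nu)_\alpha=2\mu(\d^s u)_{\alpha 3}=\mu(\nabla_\alpha u_3+\nabla_3 u_\alpha)$. Expanding the covariant derivatives and using $\Gamma_{3\alpha}^k=\Gamma_{\alpha 3}^k$ gives $(Nu)_\alpha=\mu\big(\partial_\alpha u_3+\partial_3 u_\alpha-2\Gamma_{\alpha 3}^k u_k\big)$, and since $\mu>0$ this can be solved for $\partial_3 u_\alpha$ in terms of $(Nu)_\alpha$ and of $u|_S$ (which determines the tangential derivatives $\partial_\alpha u_3$); this is the first line of \r{NN}.

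For the normal component $i=3$, using $g_{33}=1$ and $\Gamma_{33}^k=0$ one gets $(Nu)_3=\lambda\,\delta u+2\mu\,\partial_3 u_3$, which is the second line of \r{NN}. The one point requiring care is that $\delta u=\nabla^i u_i$ itself still contains a normal derivative: because $g^{3\alpha}=0$ and $g^{33}=1$, one has $\delta u=\partial_3 u_3+\big(\sum_{\alpha,\beta}g^{\alpha\beta}\partial_\alpha u_\beta-g^{ij}\Gamma_{ij}^k u_k\big)$, where the bracketed term depends on $u|_S$ only. Substituting this back, the coefficient of $\partial_3 u_3$ becomes $\lambda+2\mu=\rho c_p^2>0$, so $\partial_3 u_3$ is likewise determined. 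Thus \r{NN} is an invertible (triangular) linear system recovering $\partial_\nu u|_S=(\partial_3 u_1,\partial_3 u_2,\partial_3 u_3)$ from $u|_S$ and $Nu|_S$; conversely $(u,\partial_\nu u)|_S$ obviously determines $Nu|_S$ through \r{1s2}. The computation has no real obstacle beyond careful bookkeeping of Christoffel symbols in semigeodesic coordinates and the observation just made that the divergence term secretly carries the factor $\lambda+2\mu$, not $2\mu$, in front of $\partial_3 u_3$, which is exactly what keeps the normal-component equation nondegenerate.
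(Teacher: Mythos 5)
Your proposal is correct and follows essentially the same route as the paper: compute $(Nu)_i=\lambda(\delta u)\delta_{i3}+\mu(\partial_i u_3+\partial_3 u_i-2\Gamma_{i3}^ku_k)$ in semigeodesic coordinates and read off the normal derivatives from the cases $i=\alpha$ and $i=3$. Your additional observation that the second formula is only implicit in $\partial_3u_3$ (since $\delta u$ contains it) and that isolating it produces the nonvanishing coefficient $\lambda+2\mu$ is a genuine, correct refinement of a point the paper's proof leaves tacit.
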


Before presenting the proof, we want to emphasize that $u$ is also transformed in the new coordinates (as a covector), and $\delta$ is the divergence w.r.t.\ the transformed metric. Also, in a more invariant form, \r{NN} takes the form
\be{NNa}
\begin{split}
\nabla_3 u_\alpha  &=  \frac1\mu (Nu)_\alpha  -\nabla_\alpha u_3   , \quad \alpha=1,2,\\
\partial_3u_3 &= \frac1{2\mu} \left( (Nu)_3 - \lambda (\delta u)  \right),
\end{split}
\ee
where $\nabla$ is the covariant derivative, and we used the fact that $\nabla_3u_3=\partial_3u_3$. 

\begin{proof}
In those coordinates,
\[
(Nu)_j = \lambda (\delta u) \delta_{j3} + \mu\left( \partial_3 u_j + \partial_j u_3- 2 \Gamma_{j3}^ku_k\right).
\]
 Setting $j=3$, we get the second  formula in \r{NN} because $\Gamma_{33}^k=0$. Taking  $j=1,2$, we get the first one one. 
\end{proof}

\section{Geometric optics for the elastic wave equation} \label{sec_GO} 
We recall the well known geometric optics construction for the acoustic and the elastic wave equations, see, e.g., \cite{Taylor-book0, Treves}.

\subsection{The Cauchy Problem with data at $t=0$ in the acoustic case} We start with the scalar acoustic case which we use in the analysis of  the elastic one. We work in arbitrary dimensions $n\ge2$ here. Details can be found in \cite{ Taylor-book0, Treves}, for example. 
%\subsubsection{The Cauchy Problem with data at $t=0$} 

We recall briefly the geometric optic construction for the acoustic wave equation 
\be{ac}
(\partial_t^2- c^2 \Delta_{g_0})u=0
\ee
 with Cauchy data $(u,\partial_t u)=(h_1,h_2)$ at $t=0$. Here, $g_0$ is a Riemannian metric that we include in order to have the flexibility to change coordinates easily; and $\Delta_{g_0}$ is the Laplace-Beltrami operator. Up to lower order terms, $c^2\Delta_{g_0}$ coincides with $\Delta_g$ with  $g:=c^{-2}g_0$. 
We are looking for solutions of the form 
\be{o1}
\begin{split}
u(t,x) =  (2\pi)^{-n} \sum_{\sigma=\pm}\int e^{\i\phi_\sigma(t,x,\xi)} &\Big( a_{1,\sigma}(t,x,\xi) \hat h_1(\xi)\\
&+  |\xi|_{g_0}^{-1}a_{2,\sigma}(t,x,\xi) \hat h_2(\xi)\Big) \d \xi,
\end{split}
\ee
modulo  terms involving smoothing operators of $h_1$ and $h_2$, defined in some neighborhood of $t=0$, $x=x_0$ with some $x_0$.  This parametrix differs from the actual solution by a smoothing operator applied to $\mathbf{h}=(h_1,h_2)$, as it follows from standard hyperbolic estimates. 

Here, $a_{j,\sigma}$ are classical amplitudes of order zero depending smoothly on $t$ of the form
\be{a}
a_{j,\sigma} \sim \sum_{k=0}^\infty a_{j,\sigma}^{(k)},\quad \sigma=\pm, \; j=1,2,
\ee
where $a_{j,\sigma}^{(k)}$ is homogeneous in $\xi$ of degree $-k$ for large $|\xi|$. 
The phase functions $\phi_\pm$ are positively homogeneous of order $1$ in $\xi$ solving the eikonal equations
\be{o2}
\partial_t\phi\pm c(x)|\nabla_x\phi|_{g_0}=0, \quad 
\phi_\pm|_{t=0}=x\cdot\xi.
\ee
Such solutions exist locally only, in general.

Equate  the order $1$ terms  in the expansion of $(\partial_t^2-c^2\Delta_{g_0})u$  to get that the principal terms of the amplitudes must solve the \emph{transport equation}
\be{trans}
\left( (\partial_t \phi_\pm) \partial_t - c^{2} g_0^{ij}(\partial_i \phi_\pm)\partial_j+C_\pm \right)a_{j,\pm}^{(0)}=0, 
\ee
with appropriate initial conditions and 
\be{tr2}
2C_\pm = (\partial_t^2-c^2\Delta_{g_0})\phi_\pm.
\ee
Equating terms homogeneous in $\xi$ of lower order we get transport equations for $a_{j,\sigma}^{(k)}$, $j=1,2,\dots$ with the same left-hand side as in \r{trans}  with a right-hand side determined by $a_{k,\sigma}^{(k-1)}$.

The transport equations are ODEs along the zero bicharacteristics, which are just the geodesics of the metric $g$ lifted to the phase space, with vectors identified by covectors by the metric. The integrals appearing in \r{o1} are Fourier Integral Operators (FIOs) either with $t$ considered as a parameter, or as $t$ considered as one of the variables. In the former case, singularities of $(h_1,h_2)$ propagate along the zero bicharacteristics. More precisely,  for every $t$, 
\be{C1}
\WF(\mathbf{u}(t,\cdot)) = C_+(t)\circ\WF(\mathbf{h}) \cup C_-(t)\circ\WF(\mathbf{h}),
\ee
where $\mathbf{u}:=(u,u_t)$, $\mathbf{h}=(h_1,h_2)$ and 
\[
\begin{split}
C_+(t)(x,\xi) &= \left( \gamma_{x,\xi/|\xi|_g}(t),|\xi|_g  g\dot \gamma_{x,\xi/|\xi|_g}(t)  \right), \\
C_-(t)(x,\xi)& = \left( \gamma_{x,-\xi/|\xi|_g}(t), -|\xi|_gg\dot \gamma_{x,-\xi/|\xi|_g}(t)  \right) = C_+(-t)(x,\xi),
\end{split}
\]
and for $(x,\eta)\in T^*\R^3\setminus 0$, $\gamma_{x,\eta}$ is the geodesic issued from $x$ in direction $g^{-1}\eta$. 
 
On the other hand, considering $t$ as one of the variables, 
\be{C2}
\WF(\mathbf{u}) = C_+\circ\WF(\mathbf{h}) \cup C_-\circ\WF(\mathbf{h}),
\ee
where 
\[
\begin{split}
C_+(x,\xi) &= \left\{\left( t ,\gamma_{x,\xi/|\xi|_g}(t), -|\xi|_g, |\xi|_gg\dot \gamma_{x,\xi/|\xi|_g}(t)  \right),\; t\in\R\right\}, \\
C_-(x,\xi)& = \left\{\left( t,\gamma_{x,-\xi/|\xi|_g}(t), |\xi|_g, -|\xi|_gg\dot \gamma_{x,-\xi/|\xi|_g}(t)\right)\; t\in\R\right\}.
\end{split}
\]
 In the analysis below, we will consider $C_+$ only. 
 
The construction  above can be done  in some neighborhood of a fixed point $(0,x_0)$ in general. To extend it globally, we can localize it first for $\mathbf{h}$ with $\WF(\mathbf{h})$ in a conic neighborhood of some fixed $(x_0,\xi^0)\in T^*\R^3\setminus 0$. % and extend it until the zero  bicharacteristic through it hits $\bo$ transversally, which we assume. 
Then $u$ will be well defined near the geodesic issued from that point but in some neighborhood of $(0,x_0)$ in general. We can fix some $t=t_1$ at which $u$ is still defined,  take the Cauchy data there and use it to construct a new solution. Then we get an FIO which is a composition of the two local FIOs each one associated with a canonical diffeomorphism, then so is the composition. Then we can use a partition of unity to conclude that while the representation \r{o1} is local, the conclusions \r{C1} and \r{C2} are global. In fact, it is well known that both $\mathbf{h}\mapsto \mathbf{u}$ and  $\mathbf{h}\mapsto \mathbf{u}(t,\cdot)$ with $t$ fixed are global FIOs associated with the canonical relations in \r{C1} and \r{C2}. 
 
 In particular, if $S$ is a smooth hypersurface, and $\gamma_{x,\xi}(t)$ hits $S$ for the first time $t=t(x,\xi)$ transversely locally, then $\mathbf{h}\mapsto u|_S$ is an FIO again with a canonical relation as $C_+$ above but with $t=t(x,\xi)$ and $\dot\gamma$ replaced by its tangential projection $\eta':= \dot\gamma'$. Notice that $\tau=-|\xi|_g<0$ for $C_+$ and $\tau=|\xi|_g>0$ for $C_-$. Also, $|\tau'|<|\eta|_g$ with equality for tangent rays which we exclude.
 
\subsection{The Cauchy problem at $t=0$ and propagation of singularities in the elastic case}\label{sec_GO2} 

 We consider the elastic system in $\R^3$ now. Actually, most of the analysis holds for arbitrary $n\ge3$ but since we rely on the analysis of the principal symbol of $\Lambda$ below done in three dimensions, we consider $n=3$. Since the characteristics are of constant multiplicities, this case is well understood, see, e.g., \cite{Taylor-book0} that we review below,  or \cite{Dencker_polar}. Below we give some details specific for the elastic case which allow us to describe explicitly the boundary data $f$ generating p-waves or s-waves only, up to lower order terms. 

Consider the elastic wave equation 
\be{el}
\begin{split}
u_{tt}-Eu&=0,\\
(u,u_t)|_{t=0}&=(h_1,h_2)
\end{split}
\ee
with Cauchy data $\mathbf{h}:=(h_1,h_2)$ at $t=0$. We want to solve it microlocally for $t$ in some interval and $x$ in an open set. 
Let $\Pi_p=\Pi_p(D)$ be the projection to the p-modes, i.e., $\Pi_p$ is the Fourier multiplier $\hat u\mapsto (\xi/|\xi|)[(\xi/|\xi|)\cdot \hat u] $ and let $\Pi_s=\Id-\Pi_p$. It is easy to see that %if $n=3$, then 
$\Pi_s$ is the Fourier multiplier $\hat u\mapsto -(\xi/|\xi|)\times (\xi/|\xi|)\ \times \hat u$. Also, we may regard $h = \Pi_ph+\Pi_s h$ as the potential/solenoidal (or the Hodge) decomposition of the 1-form $h$, see, e.g., \cite{Sh-book}.

By  \r{s0}, % Lemma~\ref{lemma_2.1}, 
\be{10}
 {E} = c_p^2\Delta\Pi_p+ c_s^2\Delta \Pi_s,\quad \mod \Psi^1,
\ee
where $\Psi^m$ is the class of classical \PDO s of order $m$; and we will denote by $S^m$ the corresponding symbol class. 
This shows that to construct the leading singularity of the solution, we need to solve the decoupled system
\be{10a}
\begin{split}
(\partial_t^2-c_p^2\Delta)u_p=0, & \quad(u_p, \partial_t u_p)|_{t=0} = \Pi_p \mathbf{h},\\
(\partial_t^2-c_s^2\Delta)u_s=0, &\quad  (u_s, \partial_t u_s)|_{t=0} = \Pi_s \mathbf{h}.
\end{split}
\ee
Those are two vector valued acoustic equations. The singularities propagate along unit speed  geodesics lifted to the tangent bundle (identified with the covector one for each speed) of the metrics $c_p^{-2}\d x^2$ and $c_s^{-2}\d x^2$, respectively.  
To relate \r{10a} to \r{el}, set $u_p= \Pi_pu$, $u_s=\Pi_su$. Then $Eu_p= c_p^2\Delta u_p + R_1u$, where $R_1$ is a \PDO\ of order $1$. Next, applying $\Pi_p$ to the initial conditions in \r{el}, we get the initial conditions in the first equation in \r{10a}. %Then $u_p$ solves $(\partial_t^2- c_p^2\Delta -R_1)u_p=0$ modulo $C^\infty$
We get a similar conclusion for $u_s$. Therefore, $u_p$ and $u_s$ solve a system, compare with  \r{10a}, of the type
\[
(\partial_t^2-c_p^2\Delta)u_p +R_{11}u_p+R_{12}u_s=0, \quad (\partial_t^2-c_s^2\Delta)u_s  +R_{21}u_p+R_{22}u_s=0,
\]
where $R_{ij}$ are \PDO s of order one. Since propagation of singularities is governed by the principal part of that system only, we prove the claim associated with \r{10a}: the leading singularities, say in $H^m$ modulo $H^{m-1}$ with a fixed $m$, of $u$ can be  computed as in \r{10a}; and the whole singularities still propagate along the zero bicharacteristics of the speeds $c_p$ and $c_s$. This is a general conclusion for the solution $u$ of the elastic system since locally, we can always take the traces of $u$ and $u_t$ to some hyperplane $t=t_0$ and view the solution as the one with Cauchy data given by those traces.

We recall also the construction in \cite{Taylor-book0}, which provides another proof of the propagation of singularities in this case. 
The principal symbol $\sigma_p(-E)$ of $-E$ has eigenvalues of constant multiplicities. It is well known, see, e.g., \cite{Taylor-book0} that near every $(x_0,\xi^0)\in T^*\bar\Omega\setminus 0$, one can decouple the full symbol  $\sigma(-E)$ fully up to symbols of order $-\infty$. In other words, there exist elliptic matrix valued \PDO s $U$ and $V$ of order $0$  microlocally defined near $(x_0,\xi^0)$, so that 
\be{VEU}
VEU= \begin{pmatrix}  P&0\\0&S\end{pmatrix}
\ee
modulo $S^{-\infty}$ near $(x_0,\xi^0)$,  where $P$ is scalar, and  $S$ is a $2\times2$ matrix symbol, with principal symbols $\sigma_p(P) = c^2_p|\xi|^2$, $\sigma_p(S)=c_s^2|\xi|^2$. In other words, $P$ is scalar and $S$ is principally scalar. In fact, $U$ can be chosen to be unitary with $V=U^*$ in microlocal sense \cite{MR1777025}. As an example, the principal symbol of $U$  can be chosen to be 
\[
\sigma_p(U) =|\xi|^{-1}\begin{pmatrix}  \xi_1&  -\xi_2&0\\ \xi_2&\xi_1&\xi_3\\ \xi_3&0&-\xi_2\end{pmatrix}
\]
when $\xi_2\not=0$. 
It then follows that microlocally, the elasticity system decouples into the wave equations $(\partial_t^2-c^2\Delta)u=0$ with $c=c_p$ or $c=c_s$; the first one scalar, and the second one a $2\times 2$ system. The first one has $\Sigma_p$ as a characteristic manifold, while the second one has $\Sigma_s$. 
Even though $U$ and $V$ depend on the microlocal neighborhoods of the characteristic varieties $\Sigma_{p,s}$ we work in, the wave front sets of $U^{-1}f$, in those neighborhoods, we can apply the propagation of singularities results, or directly the microlocal geometric optics construction used below. Then we conclude that singularities in those neighborhoods propagate along the zero bicharacteristics of $\tau^2-c_p^2|\xi|^2$ and $\tau^2-c_s^2|\xi|^2$, respectively. This implies a global result, as well. The advantage of this construction is that we can do it to infinite order.

\section{Proof of the main results} 
%\begin{proof}[Proof of Theorem~\ref{thm1}]
%

The following theorem is a local version of the statement that given $\Lambda$, we can recover  the lens relations $(L_p,\ell_p)$ and $(L_s,\ell_s)$, see \cite{Rachele00, HansenUhlmann03} for the global version.

\begin{theorem}\label{thm2}
Let $\Gamma\subset\bo$ be relatively open and let $T>0$. For $0<\eps\le T$, assume that  for every   $f$ with a  singular support in $(0,\eps)\times \Gamma$,  $\Lambda f$  is known on $(0,T)\times\Gamma$, up to a smooth function. Then 
 the lens relations $(L_p,\ell_p)$ and $(L_s,\ell_s)$ are determined uniquely on the open sets of $(x,v)$ with $x\in\Gamma$ so that the unit speed geodesic issued from $(x,v)$ (i.e., unit speed in the direction of $v$) in the metric $c_p^{-2}\d x^2$, respectively $c_s^{-2}\d x^2$, is transversal at $x$ and hits $\bo$ again, transversely,  at a point in $\Gamma$ at a time not exceeding $T$. 
\end{theorem}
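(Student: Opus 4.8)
The plan is to reduce the problem to a microlocal analysis of the DN map using the geometric optics constructions of Section~\ref{sec_GO}, separately for the p-wave and s-wave components. First I would take boundary data $f$ supported in $(0,\eps)\times\Gamma$ whose wave front set is concentrated near a single covector corresponding to a geodesic $\gamma$ of $c_p^{-2}\d x^2$ (respectively $c_s^{-2}\d x^2$) that enters $\Omega$ transversally and exits transversally through $\Gamma$ within time $T$. Using Lemma~\ref{lemma_N}, specifying $u|_{\R\times\bo}=f$ and reading off $\Lambda f = Nu|_{\R\times\bo}$ is equivalent to knowing the full Cauchy data $(u,\partial_\nu u)$ on $\R\times\bo$; conversely, solving the elastic equation with such Cauchy data near the entry point, microlocally, produces a parametrix for $u$ inside $\Omega$ by the decoupled construction \r{10a}--\r{VEU}. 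The key point is that the singularity carried by the p-part $\Pi_p u$ propagates along the zero bicharacteristics of $\tau^2 - c_p^2|\xi|^2$, i.e.\ along the geodesic $\gamma$ lifted to phase space, hits $\bo$ again at the exit point of $\gamma$, and shows up in the Cauchy data there — hence in $\Lambda f$ — at the exit time $\ell_p(x,v)$, with tangential covector the projection of $\dot\gamma$ at the exit point, which is exactly $L_p(x,v)$. Doing this for all admissible $(x,v)$ recovers $(L_p,\ell_p)$ on the stated open set; and the same argument with $\Pi_s$ recovers $(L_s,\ell_s)$.

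The steps, in order. (1) Fix $(x_0,v_0)\in\partial_-S\bar\Omega$ with $x_0\in\Gamma$, transversal, such that $\gamma_{x_0,v_0}$ (in the relevant metric) is transversal throughout and exits at $(y_0,w_0)$ with $y_0\in\Gamma$ at time $\ell_0\le T$. Choose $f$ microlocalized near the covector over $(t_0,x_0)$ dual to this geodesic, with $t_0\in(0,\eps)$; one should check that this covector lies in the ``hyperbolic region'' $|\tau'|<|\eta|_g$ so that the incoming geodesic is genuinely transversal and nonglancing, which is where the transversality hypothesis is used. (2) Build the microlocal solution $u$ via the geometric optics parametrix: decompose $f$ (or rather its Cauchy data, obtained from Lemma~\ref{lemma_N}) into p- and s-parts using $\Pi_p,\Pi_s$, solve the decoupled vectorial acoustic equations \r{10a}, and track the propagation of the wave front set along the zero bicharacteristics as in \r{C1}--\r{C2}. (3) Restrict to $\R\times\bo$ near the exit point: as noted after \r{C2}, the map $\mathbf h\mapsto u|_S$ is an FIO whose canonical relation encodes the exit time $t(x,v)$ and the tangential exit covector $\dot\gamma'$; combined with Lemma~\ref{lemma_N} the same is true for $\Lambda f$ on $(0,T)\times\Gamma$. (4) Read off $\WF(\Lambda f)$, which is known modulo smoothing since $\Lambda f$ is known mod $C^\infty$, and extract $(y_0,w_0)=L(x_0,v_0)$ and $\ell_0=\ell(x_0,v_0)$ from the location and direction of the singularity; invert the polarization to confirm we are looking at the p-component (respectively s-component) and not the other. (5) Let $(x_0,v_0)$ vary over the admissible open set.

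The main obstacle I expect is \emph{isolating the p-contribution from the s-contribution} in $\Lambda f$ when both speeds are present. Even though $E$ decouples microlocally into a scalar p-block and a $2\times2$ s-block via \r{VEU}, the operators $U,V$ and the boundary trace mix the two polarizations near $\bo$; moreover a single geodesic of one metric could, in principle, have an exit point/time coinciding with that of a geodesic of the other metric, or mode conversion at the boundary could contaminate the relevant singularity. The resolution is that the two characteristic varieties $\Sigma_p=\{\tau^2=c_p^2|\xi|^2\}$ and $\Sigma_s=\{\tau^2=c_s^2|\xi|^2\}$ are disjoint away from the zero section (since $c_p>c_s$), so one can choose $f$ microlocalized in a conic neighborhood of $\Sigma_p$ only: then $\Pi_s f$ is smoothing, no s-singularity is launched, and the outgoing singularity in $\Lambda f$ is purely the p-one — the polarization of the data, made explicit via $\Pi_p$ and the parametrices of Section~\ref{sec_GO2}, cleanly selects the branch. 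One must also handle the bookkeeping that the geometric optics construction of \r{o1} is only local along $\gamma$; as explained in the text this is patched by re-initializing the Cauchy data at intermediate times and composing the resulting FIOs, which is harmless since the composition of the associated canonical graphs is again a canonical graph. The remaining verifications — that transversal entry and exit make all the relevant maps honest FIOs with graph canonical relations, and that ``known mod $C^\infty$'' suffices to pin down the wave front set and hence the lens data — are routine given the results quoted in Section~\ref{sec_GO}.
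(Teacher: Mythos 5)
There is a genuine gap at the exit point, and it is precisely the step the paper's proof is organized around. Your argument tracks the incident wave as an FIO up to the boundary and asserts that its singularity ``shows up in $\Lambda f$.'' But $\Lambda f=Nu|_{\bo}$ is the trace of the \emph{full} solution of the initial--boundary value problem, which near the exit point is the incident wave plus reflected p- and s-waves (mode conversion at $\bo$). In fact the Dirichlet trace of $u$ at the exit covector is \emph{smooth} — it equals $f$, which is singular only near the entry — so the incident and reflected waves cancel exactly in the trace, and by Lemma~\ref{lemma_N} the question becomes whether a similar cancellation occurs in $Nu$ (equivalently in $\partial_\nu u$). Your proposal never rules this out; composing canonical relations for the free propagation says nothing about it. The paper deliberately avoids building a parametrix for the reflection and instead extends $u$ by zero to $\Omega_{\rm ext}$, computes $(\partial_t^2-E)v=-\rho^{-1}(Nu)\otimes\delta_{\rm b}$, and argues by contradiction: if $Nu$ were microlocally smooth at the exit covector, the source would have no wave front set in the characteristic direction there, and propagation of singularities along $\gamma$ would force $v$ to be either singular at every point of $\gamma$ or at none — contradicting that $v$ is singular inside $\Omega$ and identically zero outside. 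Some version of this non-cancellation argument is indispensable and is missing from your proposal.

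A second, related problem is your mechanism for isolating the p-polarization. You propose to ``choose $f$ microlocalized in a conic neighborhood of $\Sigma_p$'' and to apply $\Pi_p,\Pi_s$ to the boundary Cauchy data; but $\Sigma_p,\Sigma_s$ live in $T^*(\R\times\R^3)$ while $\WF(f)\subset T^*(\R\times\bo)$, and a single boundary covector in the hyperbolic region lifts to \emph{both} a p-characteristic and an s-characteristic interior covector (with different normal components). So microlocalizing $f$ on the boundary does not select a polarization, and $\Pi_p$, being an interior Fourier multiplier, cannot be applied to boundary data. The paper resolves this by first invoking Rachele's boundary determination to recover the jets of $\rho,\lambda,\mu$ on $\Gamma$, extending the coefficients to an exterior neighborhood, constructing there a free-space microlocal p-wave $u_0$ from Cauchy data at $t=0$ outside $\bar\Omega$, and taking $f$ to be (the incoming part of) its boundary trace; this produces boundary data that launches a pure p-singularity modulo smooth errors. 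That preliminary boundary-determination step, and the exterior construction it enables, are also absent from your outline. The rest of your plan (transversality, the FIO bookkeeping, reading off $\WF(\Lambda f)$ modulo $C^\infty$) is consistent with the paper.
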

 
\begin{proof}
By  \cite{Rachele_2000}, the jets of $\rho,\lambda, \mu$ at $\Gamma$ are uniquely determined by the kernel of  $\Lambda$ known on $[(0,\eps)\times\Gamma]^2$ for any fixed $\eps>0$. Since the proof is based on applying $\Lambda$ to highly oscillatory functions, any smooth addition to that kernel would not change the reconstruction.

Using this, we extend  $\rho,\lambda, \mu$   smoothly to some small neighborhood $U$ of $\Gamma$ in the exterior $\Omega_{\rm ext}$ of $\bar\Omega$ in a way determined uniquely by the data in the theorem.  

Choose $\zeta_1:=(t_1,x_1,\tau_1=-c_p|\xi^1|, \xi^1)\in T^*((0,\eps)\times\bar\Omega)\setminus 0$ (which is characteristic for the Hamiltonian related to $c_p$), with $x_1\in\bo$ and $\xi^1$ pointing into $\Omega$.  Assume that the null bicharacteristic $\gamma$ in $\bar\Omega$ (actually, $T^*(\R\times\bar\Omega)$) through $\zeta_1$ is transversal at that point, and hits $T^*(\R\times\bo)$ again transversely at some $\zeta_2:= (t_2,x_2,\tau_2, \xi^2)$ with $x_2\in\bo$. Extend $\gamma$ outside the domain on the side of $\zeta_1$ until it hits $\{t=0\}$ at some point $\zeta_0:= (t=0,x_0,\tau_0=-c_p|\xi|,\xi^0)$. If $0<\eps\ll1$, this short segment will be outside $\bar\Omega$, i.e., $x_0\not\in\bar\Omega$, see Figure~\ref{pic3}.  

 Let $u_0'$ an outgoing (smooth for $t<0$ in $\Omega$) microlocal solution in $(0,T)\times\Omega_{\rm ext}$  related to $(\rho,\lambda, \mu)$, with a wave front set in $\Sigma_p$ only,  the latter supported in a 
small conic neighborhood of $\gamma$. This solution can be constructed by choosing suitable Cauchy data at $t=0$ near $x_0$ as explained in the previous section. 
We think of $u_0'$ as a microlocal p-wave propagating along $\gamma$.   We cut smoothly $u_0'$ so that its support is concentrated 
near $\gamma$; call the result $u_0$. %, and in particular, it  intersects $\bo$ in $\Gamma$. 
Then $(\partial_t^2-E)u_0=v\in C^\infty((0,T)\times\Omega_{\rm ext})$ and $u_0=\partial_tu_0= 0$, $v=0$ for $t$ near $0$. 

The trace $f$ of $u_0$ on the boundary can be naturally written as $f=f_1+f_2$, where $f_j$ have wave front sets in small conic neighborhoods of   the projection of $\gamma$ 
on $T^*(\R\times\bo)$, i.e., close to $\zeta_j' := (t_j,x_j,\tau_j, (\xi^j)')$, $j=1,2$, where the prime stands for a tangential projection.   

Let $u$ solve \r{1} with $f=f_1$ (i.e., $f_2$ is replaced by zero). The singularities issued from $f_1$ will propagate to the future only and before they hit $\bo$ again, $u_0$ and $u$ differ by a smooth function. When they hit $\bo$, they will reflect at $\bo$ and there will be a possible mode conversion. We are not going to build a parametrix for the reflection. Instead, it is enough to prove that $\partial_\nu u$ has a non-empty wave from set in a conic neighborhood of $\zeta_2' $. 

Let $v$ be $u$ extended as zero to $\Omega_{\rm ext}$. Then $(\partial_t^2-E)v = -\rho^{-1}(N  u)\otimes  \delta_{\rm b}$, where $\delta_{\rm b}$ is the delta function on $\R\times\bo$  on the boundary, see \r{2a}. 
Assume $\zeta_2'\not\in \WF(\partial_\nu u)$; then at $(t_2,x_2)$, the wave front set of $-(N u)\otimes \delta_{\rm b}$ on the plane $\pi$ spanned by $\zeta_2'$ and the conormal to the boundary can be only along the conormal, as it follows by the calculus of the wave front sets. In particular, $(\tau_2,\xi^2)$ (the phase component of $\gamma$ at that point) cannot be in the wave front set of $ (Nu)\otimes  \delta_{\rm b}$ because the latter is in $\pi$; and $(\tau_2,\xi^2)$  is certainly not conormal, being characteristic. 
By the propagation of singularities theorem (with a source term having a wave front set away from the microlocal region of interest), each point of $\gamma$ must be a singularity for $v$, or none is. This is a contradiction since $v$ is singular on $\gamma$ in the domain, and non-singular on $\gamma$ outside it. Therefore, $\zeta_2' \in \WF(N u)$. 

We can take a sequence of $f$'s as above with shrinking wave fronts sets to $\zeta_1$ (or take $f$ with $\WF(f_1)$ on the radial ray through $\zeta_1$) to conclude that  $\Lambda$ determines the p-lens relation $(L_p,\ell_p)$   at $\zeta_0$. 
Since the part of $\gamma$ between $\zeta_0$ and $\zeta_1$ is uniquely determined by the data, conclude that $(L_p,\ell_p)$ is uniquely determined at $\zeta_1$ as well. 

To show that $\Lambda$ determines the lens relation $(L_s,\ell_s)$ related to the s-waves on $\mathcal{G}_s$, we argue as above. 
\end{proof}

\begin{figure}
\includegraphics[page=4,scale=0.7]{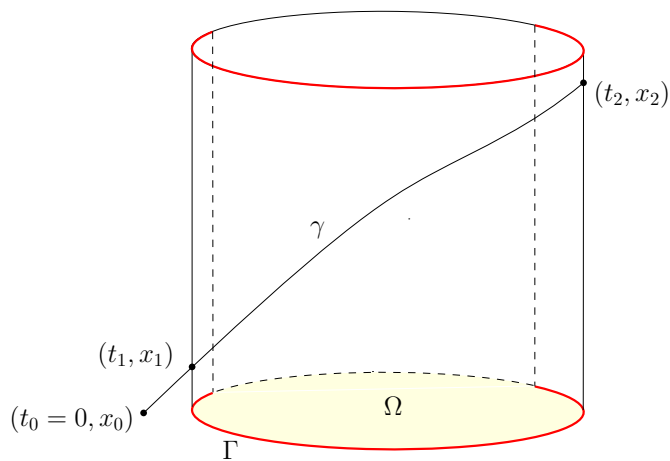}
\caption{The bicharacteristic $\gamma$ (projected to the base)}\label{pic3}
\end{figure} 

\begin{remark}\label{remark5.1}
The proof actually shows that we only  need to know the wave front set of the kernel of $\Lambda$ microlocally at $((t_1, x_1, \tau_1,-(\xi^1)'), (t_2, x_2, \tau_2,(\xi^2)')$ only for $c_p$ and similarly for $c_s$, in order to decide if   $(x_1,(\xi^1)', x_2,(\xi^2)', t_2-t_1)$   belongs to  the graph of the p-lens relation with an identification of covectors and vectors by the metric $c_p^{-2}\d x^2$.  Here, $(t_2, x_2, \tau_2,(\xi^2)')$ is the image of $(t_1, x_1, \tau_1,\xi^1)$ (recall that $(\xi^1)'$ is the projection of $\xi^1$) under the bicharacteristic flow until it hits the boundary; and then projected there. 
\end{remark}

\begin{proof}[Proof of Theorem~\ref{thm1}]
Consider $c_p$ first. By Theorem~\ref{thm2}, we can recover the lens relation $(L_p,\ell_p)$ on the foliation. By \cite{SUV_localrigidity}, this recover $c_p$ in the region covered by the foliation, as claimed. The proof for $c_s$ is the same. 
\end{proof}  

\section{The Herglotz and Wieckert \& Zoeppritz condition}\label{sec_ex}
We formulate generalized version of the Herglotz \cite{Herglotz} and Wieckert and Zoeppritz \cite{WZ} condition on a speed $c(x)$ in the ball $B(0,R)$:
\be{HWZ}
\frac{\partial}{\partial r}\frac{r}{c(r,\omega)}>0, \quad \text{for $0<r=|x|\le R$},
\ee
where $x=r\omega$, $|\omega|=1$. The original condition in  \cite{Herglotz, WZ} is about radial speeds $c(r)$ only. 
In particular, \r{HWZ} holds when $\partial_rc<0$, i.e., when the speed decreases in depth. This inequality was shown on \cite{SUV_localrigidity} to be equivalent to the requirement    the Euclidean sphere $|x|=r$ to be strictly convex with respect to $c^{-2}\d x^2$. If $\bo$ is flat locally, the convexity condition is that $c$ increases with depth. We formulate those two conditions formally in the following. 

\begin{lemma} \label{lemma_conv1}\ 

(a) The Euclidean spheres $S_r=\{x\in \R^n; \; |x|=r\}$, $R_1\le r\le R_2$, form a strictly convex foliation in some set with respect to the metric $c^{-2}\d x^2$,  viewed from the exterior, 
 if and only if \r{HWZ} holds for such $r$ and for $x=r\omega$ in that set. 

(b) The Euclidean hyperplanes $\{x\in \R^n;\; x^n=C\}$, $C_1\le C\le C_2$ form a strictly convex foliation in some set with respect to the metric $c^{-2}\d x^2$, viewed from $x^n>C_2$ if and only   $\partial c/\partial x^n>0$ in that set. 
\end{lemma}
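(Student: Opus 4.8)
The plan is to reduce both parts to a short computation with the geodesic equation of the conformal metric $g:=c^{-2}\,\d x^2$. Both assertions are local statements about the sign of the second fundamental form of the level sets relative to $g$, so it suffices to fix a point $p$ of a level set $S$ and a unit‑speed $g$‑geodesic $\gamma$ through $p$ tangent to $S$ there, and to determine the sign of $(\d^2/\d s^2)\,f(\gamma(s))|_{s=0}$, where $f$ is the defining function of the foliation ($f(x)=|x|$ in (a), $f(x)=x^n$ in (b)). By Definition~\ref{def_5.1}, the foliation is strictly convex, viewed from the side on which $\kappa^{-1}((0,q))$ lies, exactly when for every such $p$ and $\gamma$ this second derivative has the sign forcing $\gamma$ to curve toward that side; since here the level sets are parallel (concentric spheres, resp.\ parallel hyperplanes), $\mathrm{Hess}\,f$ restricted to $T_pS$ is a scalar multiple of the identity, so the condition collapses to a single scalar inequality rather than a full definiteness statement.

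Next I would record the computation. Writing $g=e^{2\phi}\,\d x^2$ with $\phi=-\log c$, the Christoffel symbols are $\Gamma^k_{ij}=\delta^k_i\p_j\phi+\delta^k_j\p_i\phi-\delta_{ij}\p_k\phi$, so geodesics obey $\ddot\gamma^k=-2(\dot\gamma\cdot\nabla\phi)\,\dot\gamma^k+|\dot\gamma|^2\,\p_k\phi$, with the Euclidean dot product and norm. Differentiating $f(\gamma(s))$ twice, pairing the $\ddot\gamma$‑term with $\nabla f$, using the tangency $\nabla f\cdot\dot\gamma=0$ to kill the first term, and using $|\dot\gamma|^2=c^2$ (from $|\dot\gamma|_g=1$), one gets
\be{eq-secderiv}
\frac{\d^2}{\d s^2}\Big|_{s=0}f(\gamma(s))=\mathrm{Hess}\,f(\dot\gamma,\dot\gamma)-c\,\big(\nabla f\cdot\nabla c\big).
\ee
For (b), $f=x^n$ gives $\nabla f=(0,\dots,0,1)$ and $\mathrm{Hess}\,f=0$, so \r{eq-secderiv} is $-c\,\p c/\p x^n$, of constant sign; unwinding the orientation in Definition~\ref{def_5.1} for the side $\{x^n>C_2\}$ then yields exactly the one‑sided condition on $\p c/\p x^n$ in the statement. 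For (a), at $x=r\omega$ one has $\nabla f=\omega$ and $\mathrm{Hess}\,f=r^{-1}(\Id-\omega\otimes\omega)$, and tangency to $\{|x|=r\}$ means $\dot\gamma\cdot\omega=0$, so \r{eq-secderiv} becomes $r^{-1}|\dot\gamma|^2-c\,\p_rc=c^2/r-c\,\p_rc$, which by the identity $\p_r(r/c)=(c-r\,\p_rc)/c^2$ equals $(c^3/r)\,\p_r(r/c)$. Hence along every tangent geodesic the second derivative is positive precisely when \r{HWZ} holds at $r\omega$, which is the strict‑convexity condition; this is the equivalence attributed to \cite{SUV_localrigidity}. (Alternatively, (b) can be deduced from (a) by degenerating the sphere through $p$ into the tangent hyperplane, $r\to\infty$.)

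The computation itself is routine; the only genuinely delicate point — and the step I would be most careful about — is the orientation bookkeeping: reading off from Definition~\ref{def_5.1} on which side of each level set the region $\kappa^{-1}((0,q))$ sits, hence whether strict convexity forces $\gamma$ to bend toward larger or smaller $f$, and therefore that the resulting inequalities come out with the signs displayed in (a) and (b) rather than reversed. Beyond this sign accounting there is no analytic obstacle.
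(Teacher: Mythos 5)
Your route is genuinely different from the paper's and, as a computation, it is correct. The paper proceeds via Lemma~\ref{lemma_conv2}: it computes how the second fundamental form of a fixed hypersurface changes under the conformal rescaling $g\mapsto c^{-2}g$ (a Christoffel-symbol computation in semigeodesic coordinates), deduces (b) from the fact that hyperplanes are flat for the Euclidean metric (so the ``if and only if'' branch applies), and gets (a) in full strength by the extra observation that Euclidean spheres are totally geodesic for $|x|^{-2}\d x^2$, so one applies the same lemma with base metric $|x|^{-2}\d x^2$ and conformal factor $c/r$ (whence $\partial_r(r/c)$ appears). You instead differentiate the defining function twice along a $g$-geodesic tangent to the level set; your identity
\[
\frac{\d^2}{\d s^2}\Big|_{s=0}f(\gamma(s))=\mathrm{Hess}\,f(\dot\gamma,\dot\gamma)-c\,\nabla f\cdot\nabla c
\]
is correct (I checked the Christoffel symbols, the use of tangency to kill the cross term, and $|\dot\gamma|^2=c^2$), and for (a) it yields $c^2/r-c\,\partial_rc=(c^3/r)\,\partial_r(r/c)$, recovering \r{HWZ} directly without the $|x|^{-2}\d x^2$ trick. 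That is a nice self-contained alternative, and the observation that $\mathrm{Hess}\,f|_{T_pS}$ is a multiple of the identity correctly explains why a single scalar inequality suffices.

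The one real defect is in part (b), and it sits exactly at the step you flagged as delicate and then did not carry out. With the convention you use for (a) --- ``strictly convex viewed from the side $\{f>f(p)\}$'' means $\frac{\d^2}{\d s^2}f(\gamma(s))|_{s=0}>0$ for tangent geodesics, which is the convention that makes (a) come out as \r{HWZ} --- applying your formula to $f=x^n$ and the viewing side $\{x^n>C_2\}$ gives the condition $-c\,\partial c/\partial x^n>0$, i.e.\ $\partial c/\partial x^n<0$, which is the \emph{opposite} of the inequality displayed in the statement. So ``unwinding the orientation \dots yields exactly the one-sided condition in the statement'' is not something you can assert; done honestly, it does not. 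The resolution is that the statement's orientation and inequality are mismatched as printed: the paper's own application (Earth modeled by $x^3>0$ with condition $\partial_{x^3}c>0$) and its Lemma~\ref{lemma_conv2} (convexity for $c^{-2}g$ requires $c$ to \emph{decrease} toward the convex/viewing side) both force the viewing side in (b) to be the shallow side $x^n<C_1$ if the condition is to read $\partial c/\partial x^n>0$. You should carry out the sign bookkeeping explicitly and note the discrepancy with the literal wording, rather than deferring it; as written, your proof of (b) either proves the statement with the wrong sign or silently relies on the corrected orientation without saying so.
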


Part (a) is proved in \cite{SUV_localrigidity}. Those two statements
are a partial case of the following more general one. Recall that
strict convexity of an oriented hypersurface $S$ in a Riemannian
manifold is defined as a positivity of the second fundamental form;
and if that form in non-negative, we will call $S$ convex. If the
second fundamental form vanishes at some point of $S$, we call this
point flat, which is a special case of convex. 
Under this definition, totally geodesic hypersurfaces are still convex.  

\begin{lemma}\label{lemma_conv2}
Let the oriented hypersurface $S$ be strictly convex w.r.t.\ the metric $g$ at some point $x_0$. Fix a smooth $c>0$.  Let $\partial/\partial\nu$ be the  unit normal derivative at $x_0$ pointing to the convex side. If $\partial c/\partial{\nu}<0$, then   $S$ is strictly convex w.r.t.\ the metric $c^{-2}g$ at $x_0$. 

If $S$ is flat at $x_0$, then $\partial c/\partial{\nu}<0$ is an if and only if condition for strict convexity. 
\end{lemma}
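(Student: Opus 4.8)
The plan is to reduce the statement to a pointwise computation of how the second fundamental form of $S$ at $x_0$ changes under the conformal scaling $g\mapsto c^{-2}g$, and then simply read off the sign of the correction term. Fix conventions at $x_0$: let $\nu=\partial/\partial\nu$ be the $g$--unit normal of $S$ pointing to the convex side, and let $\mathrm{II}(X,Y)=-g(\nabla_XY,\nu)=g(\nabla_X\nu,Y)$ for $X,Y\in T_{x_0}S$ be the corresponding second fundamental form, normalized so that ``strictly convex'' means $\mathrm{II}>0$ (e.g.\ a Euclidean sphere of radius $r$ with $\nu$ the outward normal has $\mathrm{II}=r^{-1}g|_{TS}>0$). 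Since $c^{-2}g$ is conformal to $g$, the vector $\tilde\nu:=c\,\nu$ is the $(c^{-2}g)$--unit normal of $S$ in the same direction, and I will write $\widetilde{\mathrm{II}}$ for the second fundamental form of $S$ in $(M,c^{-2}g)$ with respect to $\tilde\nu$.

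Next I would insert $\varphi:=-\log c$ (so that $e^{2\varphi}g=c^{-2}g$) into the standard conformal transformation law for the Levi--Civita connection, $\tilde\nabla_XY=\nabla_XY+(X\varphi)Y+(Y\varphi)X-g(X,Y)\,\grad_g\varphi$, valid for any vector fields $X,Y$. For $X,Y$ tangent to $S$ the two middle terms are tangent to $S$ and drop out upon pairing with $\tilde\nu$, while $g(\grad_g\varphi,\nu)=\nu\varphi=-c^{-1}\,\partial c/\partial\nu$, so a one--line computation yields, at $x_0$,
\[
\widetilde{\mathrm{II}}(X,Y)\;=\;\frac1c\Bigl(\mathrm{II}(X,Y)-\frac1c\,\frac{\partial c}{\partial\nu}\,g(X,Y)\Bigr),\qquad X,Y\in T_{x_0}S .
\]
Because the second fundamental form is a tensor at the point, this identity at $x_0$ is all the input we need; nothing global about the coefficients enters.

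The two assertions then drop out. If $\partial c/\partial\nu<0$ at $x_0$, then $-c^{-1}\partial c/\partial\nu>0$, so $\widetilde{\mathrm{II}}$ is $c^{-1}>0$ times the sum of $\mathrm{II}$ and a \emph{positive} multiple of $g|_{T_{x_0}S}$; if $\mathrm{II}>0$ (strict convexity w.r.t.\ $g$) this is positive definite, i.e.\ $S$ is strictly convex w.r.t.\ $c^{-2}g$ at $x_0$. If instead $S$ is flat at $x_0$, i.e.\ $\mathrm{II}=0$, the formula collapses to $\widetilde{\mathrm{II}}=-c^{-2}(\partial c/\partial\nu)\,g|_{T_{x_0}S}$, which is positive definite if and only if $\partial c/\partial\nu<0$, giving the claimed equivalence. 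As a consistency check I would specialize to the Euclidean sphere $\{|x|=r\}$ with $\nu$ outward: the displayed identity becomes $\widetilde{\mathrm{II}}=(cr)^{-1}\bigl(1-r(\partial_rc)/c\bigr)\,g|_{TS}$, whose positivity is exactly the Herglotz condition \r{HWZ} of Lemma~\ref{lemma_conv1}, and the Euclidean hyperplane is precisely the flat case above. The only real work is sign/orientation bookkeeping --- pinning down once and for all which unit normal realizes the ``convex side'' and which sign convention makes strict convexity equivalent to $\mathrm{II}>0$, and then the sign of the $\grad_g\varphi$ term --- and I expect that bookkeeping, not any analytic difficulty, to be the only thing one has to be careful about.
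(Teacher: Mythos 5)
Your proof is correct and follows essentially the same route as the paper: both compute the conformal change of the second fundamental form and read off the sign of the correction term $-c^{-1}(\partial c/\partial\nu)\,g|_{T_{x_0}S}$. The only difference is presentational --- you use the invariant transformation law for the Levi--Civita connection under $g\mapsto e^{2\varphi}g$, while the paper carries out the equivalent computation via Christoffel symbols in semigeodesic coordinates (the two formulas agree up to the overall positive factor $c^{-1}$ coming from normalizing the normal).
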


\begin{proof} 
We work in semigeodesic local coordinates near $x_0$ so that $S$ is given locally by $x^n=0$ and the convex side is $x^n>0$. 
We need to show that the second fundamental form related to $c^{-2}g$ on  hyperplane $x^n=0$ is positive when that related to $g$ is. Denote the Christoffel symbols of $g$ by $\Gamma_{ij}^k$ and those of $c^{-2}\d x^2$ by $\tilde \Gamma_{ij}^k$. Using the relationship between Christoffel symbols of conformal metrics, we get  
\[
\tilde \Gamma_{ij}^k = \Gamma_{ij}^k  +\frac12 c^2\left(\delta_{j}^k\partial_{x^i} + \delta_{i}^k\partial_{x^j} -  g_{ij}\partial_{x^k}\right)c^{-2}. 
\]
On  $T\{x^n=0\}$, which implies $\xi^n=0$ in particular, the second fundamental form of $g$ and  $c^{-2}g$ on $x^n=0$ are related by
\[
-\tilde\Gamma_{\alpha\beta}^n\xi^\alpha\xi^\beta = 
-\Gamma_{\alpha\beta}^n\xi^\alpha\xi^\beta  +\frac12c^2|\xi'|_g^2\partial_{x^n} c^{-2},
\]
where Greek indices run from $1$ to $n-1$ and $\xi'=(\xi^1,\dots,\xi^{n-1})$. Therefore, that form is positive   if $\partial_{x^n} c<0$; and when the form on the right vanishes, then this is an if and only of condition. 
\end{proof}

Lemma~\ref{lemma_conv1}(b) then follows from Lemma~\ref{lemma_conv2} since 
%the Euclidean balls are strictly convex and 
the Euclidean hypersurfaces are convex by our definition (they are flat). Using the fact that the Euclidean spheres are strictly convex, we can derive strict convexity in Lemma~\ref{lemma_conv1}(a) under the weaker condition $\partial_r c<0$. One can check directly the the Euclidean  spheres are flat  for the metric $|x|^{-2}\d x^2$, which implies Lemma~\ref{lemma_conv1}(a) in its full strength. 

\begin{figure}[h]
\includegraphics[scale=1,page=2]{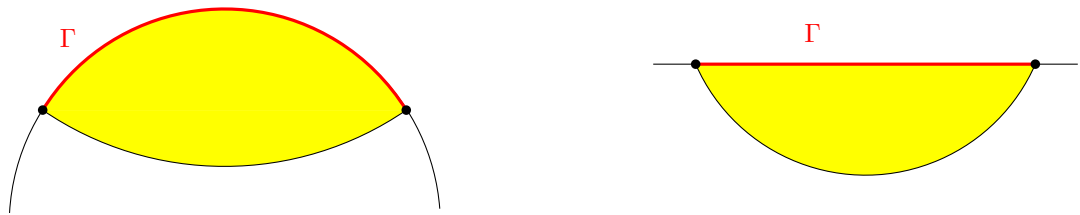}
\caption{The shaded region is where we can recover the speed if the speed increases with depth, illustrating Lemma~\ref{lemma_conv1} (a) and (b), respectively.}\label{pic2}
\end{figure}

Part (b) and Theorem~\ref{thm1} in particular provide uniqueness for the local seismology problem when the surface of the Earth is modeled as the plane $x^3=0$ in $\R^3$, and the Earth itself is given  locally by $x^3>0$, under the conditions $\partial_{x^3}c_p>0$ and $\partial_{x^3}c_s>0$, see Figure~\ref{pic2}(b). For deeper regions, the spherical model can be used and then condition \r{HWZ} guarantees existence of a strictly convex foliation. 
Those conditions are satisfied in the Upper Mantle, at least, according to the popular Preliminary Reference Earth Model (PERM) \cite{DZIEWONSKI1981297}. In fact, the stronger condition $\partial_rc<0$ holds, and   Figure~\ref{pic2}(a)  illustrates typical regions where the uniqueness holds.

%\bibliographystyle{abbrv}
%\bibliography{myreferences}

\end{document}